\newtheorem{theorem}{Theorem}
\newtheorem{lemma}[theorem]{Lemma}
\newtheorem{proposition}[theorem]{Proposition}
\newtheorem{corollary}[theorem]{Corollary}
\newtheorem*{theorema}{Theorem}
\theoremstyle{definition}
\newtheorem{definition}[theorem]{Definition}
\newtheorem{notation}[theorem]{Notation}
\theoremstyle{remark}
\newtheorem{remark}[theorem]{Remark}
\newcommand{\ab}{\allowbreak}
\newcommand{\bC}{\mathbb{C}}
\newcommand{\E}{\textrm{E}}
\newcommand{\F}{\mathbb{F}}
\newcommand{\Tr}{\mathop{{\rm Tr}}}
\newcommand{\wt}{\widetilde}
\newcommand{\wh}{\widehat}
\title{The Non-Commutative Cycle Lemma}
\author[Armstrong]{Craig Armstrong$^{(*)}$}
\address{Queen's University, Department of Mathematics and
  Statistics, Jeffery Hall, Kingston, ON K7L 3N6, Canada}
  \thanks{$^{*}$Research supported by a USRA from the Natural
Sciences and Engineering Research Council of Canada}
\email{5ca5@queensu.ca}
\author[Mingo]{James A. Mingo$^{(\dagger)}$}
\address{Queen's University, Department of Mathematics and
  Statistics, Jeffery Hall, Kingston, ON K7L 3N6, Canada}
\thanks{$^{\dagger}$
Research supported by Discovery Grants from the Natural
Sciences and Engineering Research Council of Canada}
\email{mingo@mast.queensu.ca}
\author[Speicher]{Roland Speicher$^{(\dagger)(\ddagger)}$}
\address{Queen's University, Department of Mathematics and
  Statistics, Jeffery Hall, Kingston, ON K7L 3N6, Canada}
\email{speicher@mast.queensu.ca}
\thanks{$^{\ddagger}$
Research supported by a Killam Fellowship from
the Canada Council for the Arts.}
\author[Wilson]{Jennifer C. H. Wilson$^{(*)}$}
\address{Queen's University, Department of Mathematics and
  Statistics, Jeffery Hall, Kingston, ON K7L 3N6, Canada}
\email{4jchw@qlink.queensu.ca}
\date{}
\begin{document}


\begin{abstract}
We present a non-commutative version of the Cycle Lemma of Dvoretsky and Motzkin that
applies to free groups and use this result to solve a number of problems involving cyclic
reduction in the free group. We also describe an application to random matrices, in
particular the fluctuations of Kesten's Law.
\end{abstract}

\maketitle


\section{Introduction}
Suppose $n$ is a positive integer and $\epsilon_1, \dots, \epsilon_n$ is a string of
$+1$'s and $-1$'s. The string is said to be \textit{dominating} if for each $1 \leq i
\leq n$ the number of $+1$'s in the initial substring $\epsilon_1, \dots , \epsilon_i$ is
more than the number of $-1$'s in $\epsilon_1, \dots , \epsilon_i$.

Let $k:=\sum_{i=1}^n \epsilon_i$ be the difference between the number of $+1$'s and
$-1$'s in $\epsilon_1, \dots, \epsilon_n$.  The Cycle Lemma asserts that if $k> 0$ then
there are exactly $k$ cyclic permutations of the string $\epsilon_1, \dots, \epsilon_n$
which are dominating. The statement dates at least to J. Bertrand \cite{b} in 1887.
Dvoretsky and Motzkin \cite[Theorem 1]{dm} gave a simple and elegant proof; see also
Dershowitz and Zaks \cite{dz} for a survey of recent references and
applications.\footnote{The Cycle Lemma covers also generalizations to $p$-dominating
strings; however the case $p
  > 1$ follows by replacing each $\epsilon = -1$ with $p$
  $\epsilon$'s equal to $-1$.}

In this paper we prove a version of the Cycle Lemma for free groups, which when the group
has only one generator reduces to the result of Dvoretsky and Motzkin. In the free group
case, the non-commutative nature of the problem means that simply counting the excess of
$+1$'s to $-1$'s cannot describe the resulting configurations. Instead, we are required
to use planar diagrams.

The application to random matrices involves the concept of asymptotic freeness introduced
over twenty years ago by D. Voiculescu \cite{vdn}, who showed that the asymptotics of
certain random matrix ensembles can be described using the algebra of the free group. At
the end of the paper we shall give an indication of the problem on random matrix theory
that led us to the Non-Commutative Cycle Lemma.

Let $\F_N$ be the free group on the $N$ generators $u_1, \dots, u_N$ and let $w = l_1
\cdots l_n$ be a word in $u_1^{\pm 1}, \dots, u_N^{\pm 1}$. By a word we mean a string a
letters which may or may not simplify. The \textit{length} of a word is the number of
letters in the string. Following usual terminology, we shall say that $w = l_1 \cdots
l_k$ is reduced, or to be more precise \textit{linearly reduced}, if for all $1 \leq i <
k$, $l_i \not = l_{i+1}^{-1}$. We shall say that $w$ is \textit{cyclically reduced} if in
addition $l_k \not = l_1^{-1}$. Equivalently, $w$ is cyclically reduced if $w \cdot w$ is
linearly reduced. We say that a word $w$ \textit{reduces linearly} to a word $w'$ if $w'$
is linearly reduced and can be obtained from $w$ by successively removing neighboring
letters which are inverses of each other. We say that $w$ \textit{reduces cyclically} to
$w'$ if $w'$ is cyclically reduced and we can obtain $w'$ from $w$ by successive removal
of cyclic neighbors which are inverses of each other (i.e., in that case we might also
remove the first and the last letter if they are inverses of each other). We say that a
word $w$ is \textit{reducible to 1} if it reduces linearly to the identity in $\F_N$. One
should note that for reductions to 1 there is no difference between linear and cyclic
reducibility; any word that can be reduced cyclically to 1 can also be reduced linearly
to 1. If the reduced word $w'$ is not the identity then the situation will be quite
different for the two cases $N=1$ and $N>1$. For $N=1$ any cyclic reduction can also be
achieved in a linear way, but this is not the case for $N>1$ any more. In particular,
whereas the linear reduction of a word is always unique, this is not true any more for
the cyclic reduction. For example, the word $u_1^{-1}u_2u_2^{-1}u_1^{-1}u_2^{-1}u_1$,
which reduces linearly to $u_1^{-1}u_1^{-1}u_2^{-1}u_1$, has two different cyclic
reductions, namely $u_1^{-1}u_2^{-1}$ and $u_2^{-1}u_1^{-1}$. Since one can think of the
cyclic reduction as acting on the letters arranged on a circle, it is clear that any two
cyclic reductions are related by a cyclic permutation. Thus the length of a cyclic
reduction is well defined.

Recall that the string $\epsilon_1, \epsilon_2, \epsilon_3, \dots, \epsilon_n$ is
dominating if
\[\epsilon_1\]
\[\epsilon_1 + \epsilon_2\]
\[\epsilon_1 + \epsilon_2+ \epsilon_3\]
\[ \vdots\]
\[\epsilon_1 + \epsilon_2+ \epsilon_3 + \dots + \epsilon_n\]
are all strictly positive.

Let us translate this property to the word $u^{\epsilon_1} u^{\epsilon_2} u^{\epsilon_3}
\cdots u^{\epsilon_n}$, with $u=u_1$. Starting with any word $l_1 \cdots l_n$ with $l_i
\in \{u_1^{\pm 1}, \dots, u_N^{\pm1} \}$ we let $s_j$ be its $j$-th prefix, $s_j = l_1
\cdots l_j$. Then the dominating property of $\epsilon_1, \epsilon_2, \epsilon_3, \dots,
\epsilon_n$ is equivalent to the fact that no prefix $s_j$ ($1\leq j\leq n$) of
$u^{\epsilon_1}_1 u^{\epsilon_2}_1 u^{\epsilon_3}_1 \cdots u^{\epsilon_n}_1$ is reducible
to 1.

For example let $\epsilon_1 = \epsilon_2 = \epsilon_3 =1$ and $\epsilon_4 = \epsilon_5 =
-1$; then the only cyclic permutation of $\epsilon_1, \epsilon_2, \epsilon_3, \epsilon_4,
\epsilon_5$ that is dominating is the identity permutation. Equivalently, of the five
cyclic permutations of $u\, u\, u\, u^{-1} u^{-1}$, only $u\, u\, u\, u^{-1} u^{-1}$
(i.e. apply the identity permutation) has no prefixes which are reducible to 1.

Now let us consider the same problem when the word contains more than one generator. We
start with a word $w = l_1 \cdots l_n$ with $l_i \in \{u_1^{\pm 1}, \dots, u_N^{\pm1}
\}$. We say that $w$ has \textit{good reduction} if
\begin{enumerate}
\item no prefix of $w$ is reducible to 1 and
\item the linear reduction of $w$ is cyclically reduced. (In
  the case of only one generator, this condition is always
  satisfied.)
\end{enumerate}
We can then ask how many cyclic permutations of $w$ have good reduction; and our main
result is that this is the same as the number of letters in a cyclic reduction of $w$.

\begin{theorema}[The Non-Commutative Cycle Lemma]
Let $w$ be a word and $k$ the length of a cyclic reduction of $w$.
Then $w$ has exactly $k$ cyclic permutations with good reduction.
\end{theorema}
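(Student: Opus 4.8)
I would try to reduce the general free-group statement to the classical Dvoretsky–Motzkin result by a careful bookkeeping of what cyclic reduction does to the circular arrangement of letters. First I would set up the picture: arrange the $n$ letters $l_1,\dots,l_n$ of $w$ on a circle, and record the cyclic cancellations as a non-crossing matching (a planar diagram) pairing each removed letter with its inverse partner. The unmatched letters are exactly those surviving in a cyclic reduction, so there are $k$ of them; the key structural fact to prove is that this non-crossing pairing is essentially unique (the unmatched positions are determined up to the cyclic rotation already mentioned in the introduction), so the diagram is a genuine invariant of $w$. This uniqueness is where the planar/non-crossing nature of free-group reduction has to be exploited, and I expect it to be the first real obstacle.

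Next I would connect ``good reduction'' of a cyclic permutation $w^{(j)} = l_j l_{j+1}\cdots l_n l_1 \cdots l_{j-1}$ to where the cut point $j$ sits relative to this non-crossing diagram. Condition (1), that no prefix of $w^{(j)}$ is reducible to $1$, should translate — reading the circle starting just before position $j$ — into a ``dominating''-type condition on the sequence of openings and closings of arcs of the diagram together with the surviving letters; this is the exact analogue of the prefix-sum condition in the one-generator case, and here the role of the $\epsilon_i = +1$ is played by an unmatched letter or the opening of an arc, while $\epsilon_i = -1$ corresponds to closing an arc. Condition (2), that the linear reduction is cyclically reduced, should rule out precisely the cut points that fall ``inside'' the unmatched block in a way that would let the first and last surviving letters cancel. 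The content of the theorem is then: among the $n$ rotations, exactly $k$ choices of cut point $j$ make both conditions hold.

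To finish, I would prove that count by the same device Dvoretsky and Motzkin use: pass to the ``unfolded'' height function. Assign to the circular word a walk whose increments are $+1$ at the start of each maximal surviving/opening event and $-1$ at each matched closing (more precisely, I would collapse each completed arc to a single $-1$ so the total drop over one loop is $n - (\text{number of arcs})$, reconciled with the $k$ surviving letters by a second counting argument). A cut point gives good reduction iff it is a strict left-to-right minimum of this walk read cyclically, and the classical Cycle Lemma — equivalently, the elementary fact that a cyclic walk with net displacement $k>0$ has exactly $k$ such minima — gives the count $k$. Along the way I would need the degenerate case $k=0$ (i.e. $w$ reduces to $1$): there condition (2) fails for every rotation, consistent with ``exactly $0$.''

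**Main obstacle.** The genuinely non-commutative step is the uniqueness of the non-crossing cancellation diagram and the verification that condition (1) is \emph{exactly} captured by the dominating/height condition on arc-openings and surviving letters — that no prefix cancels to $1$ must be shown equivalent to ``every proper prefix contains more openings-or-survivors than completed closings,'' with the subtlety that a prefix can cancel partially without cancelling to $1$. Getting the translation watertight, and handling the interaction of condition (2) with the boundary of the unmatched block, is where the real work lies; once that dictionary is in place, the enumeration is just the one-generator Cycle Lemma applied to the derived $\pm1$ sequence.
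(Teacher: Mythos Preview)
Your instinct to use a height function and count strict cyclic minima is exactly the paper's mechanism, but you have routed it through the non-crossing cancellation diagram, and that detour is where the proposal runs into trouble. The paper works directly with the intrinsic height function
\[
t_i = \bigl|\,\text{linear reduction of the first $i$ letters of } w_\infty\,\bigr|,
\]
so that $t_i - t_{i-1} \in \{+1,-1\}$ automatically and no diagram is needed. The one technical lemma (Proposition~\ref{shift-translation}) shows $t_{i+n} = t_i + k$ for $i$ large; then the $k$ cyclic permutations with good reduction are precisely those starting at the positions $i_1 < \cdots < i_k \le i_1 + n$ where $t$ \emph{last} attains each of $k$ consecutive heights. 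Condition~(1) is immediate since $t$ never returns to $t_{i_l}$, and condition~(2) at the remaining positions is handled by a one-line inequality: if $t_i = t_{i-1} - 1$ then the rotation $w_0$ starting at $i$ satisfies $|w_0| > k$, so its linear reduction cannot be cyclically reduced.

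The obstacle you correctly identify---uniqueness of the non-crossing pairing---is real (the paper's Figure~2 shows multiple $w$-pairings already for $u\,u\,u\,u^{-1}u^{-1}$), and your translation ``prefix reducible to $1$ $\Leftrightarrow$ all arcs close inside the prefix'' is \emph{not} valid for an arbitrary $w$-pairing, only for a carefully chosen one. In the paper the logical order is the reverse of yours: the Cycle Lemma is proved first via $t_i$, and uniqueness of the $w$-admissible half-pairing (Theorem~\ref{uniqueness}) is then proved \emph{using} the Cycle Lemma to guarantee that some rotation has good reduction. So your plan, as stated, would be circular if it relied on the paper's uniqueness result, and independently establishing the diagram with the exact property you need amounts to redoing the $t_i$ argument in disguise. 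The fix is simply to drop the diagram layer and argue with $t_i$ directly.
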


Formally, the Non-Commutative Cycle Lemma is also true for $k=0$, because then no cyclic
permutation has good reduction (as any cyclic permutation is reducible to 1). For $k\geq
1$, the statement is not so obvious because of the difference between linear and cyclic
reducibility. We will in the following always assume that $k\geq 1$.

There is a way to say which letters of $w$ remain after cyclic reduction -- as we will
see, a letter $l_i$ of $ w = l_1 \cdots l_n$ remains if the word $l_i l_{i+1} \cdots l_n
l_1 \ab\cdots l_{i-1}$ has good reduction.  Moreover, we also show that one can
canonically assign to each word of length $n$ that cyclically reduces to a word of length
$k$, a planar diagram, called a non-crossing circular half-pairing on $[n]$ with $k$
through strings (see Figure 5). Let us begin by recalling some definitions.

Let $n$ be a positive integer and $[n] = \{1, 2, 3, \dots, n\}$.  By a partition $\pi$ of
$[n]$ we mean a decomposition of $[n]$ into non-empty disjoint subsets $\pi = \{V_1,
\cdots, V_r\}$, i.e.
\[
[n] = V_1 \cup \cdots \cup V_r \mbox{\ and\ } V_i \cap V_j = \emptyset \mbox{\ for\ } i
\not = j
\]
The subsets $V_i$ are called the blocks of $\pi$, and we write $i \sim_\pi j$ if $i, j
\in [n]$ are in the same block of $\pi$. We say that $\pi$ has a crossing if we can find
$i_1 < i_2 < i_3 < i_4 \in [n]$ such that
\[
i_1 \sim_\pi i_3
\mbox{\ and\ }
i_2 \sim_\pi i_4,
\mbox{\ but\ }
i_1 \not \sim_\pi i_2
\]
We say that $\pi$ is \textit{non-crossing} if it has no
crossings. A partition is called a \textit{pairing} if all
its blocks have exactly two elements; this can only happen
when $n$ is even. See \cite{ns} or \cite{s} for a full discussion of
non-crossing partitions.

We next wish to consider a special kind of non-crossing partition called a half-pairing.

\begin{definition}
Let $\pi$ be a non-crossing partition in which no block has more than two elements and
for which we have at least one block of size 1. From $\pi$ create a new partition
$\tilde\pi$ by joining into a single block all the blocks of $\pi$ of size 1. If
$\tilde\pi$ is non-crossing we say that $\pi$ is a non-crossing \textit{half-pairing}.
The blocks of $\pi$ of size 1 are called the \textit{through strings}.

\end{definition}

Note that we require a half-pairing to have at least one through string. This corresponds
to $k\geq 1$ in our Cycle Lemma.

Let us relate this definition to our good reduction
problem. Let $w = l_1 \cdots l_n$ with $l_i \in \{u_1^{\pm
  1}, \dots, u_N^{\pm 1} \}$ be a word with $n$ letters that
cyclically reduces to a word of length $k$. We wish to
assign to $w$ a unique non-crossing half-pairing on $[n]$
with $k$ through strings.

\begin{center}
\hbox{}\hfill
\includegraphics{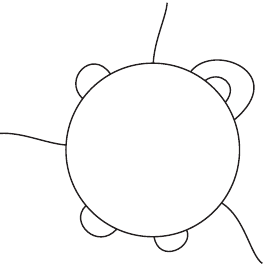} \hfill
\includegraphics{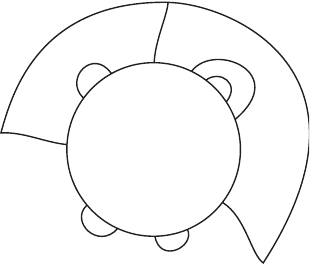}
\hfill\hbox{}
\leavevmode
\vbox{\hsize 230pt\raggedright\noindent
{\small\textbf{Figure 1.}
On the left is $\pi$ and on the right is $\tilde \pi$. }}
\end{center}

\bigskip

\begin{definition}
Let $w = l_1 \cdots l_n$ and $\pi$ be a non-crossing
half-pairing on $[n]$. We say that $\pi$ is a
$w$-\textit{pairing} if
\begin{enumerate}
\item if $(r, s)$ is a pair of $\pi$ then $l_r = l_s^{-1}$ and
\item if the singletons of $\pi$ are $(i_1), (i_2), \dots ,
  (i_k)$, then $l_{i_1} l_{i_2} \cdots l_{i_k}$ is a
  cyclic reduction of $w$.
\end{enumerate}

Given $w$ there may be more than one $\pi$ which is a
pairing of $w$. See Figure 2 below.

\begin{center}
\hbox{}\hfill
\includegraphics{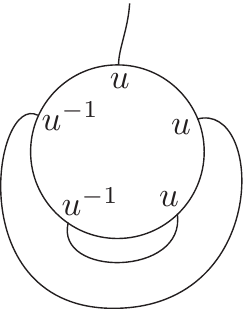}\hfill
\includegraphics{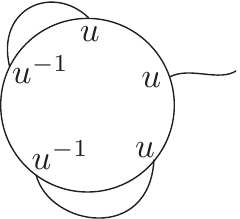}\hfill
\includegraphics{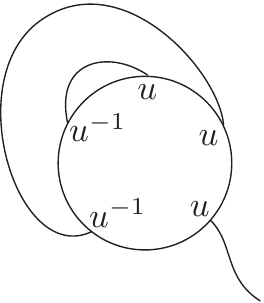}
\hfill\hbox{}

\medskip

\leavevmode
\vbox{\hsize 290pt\raggedright\noindent
{\small\textbf{Figure 2.}
The three possible $w$-pairings of $u\, u\, u\, u^{-1} u^{-1}$.}}
\end{center}

\end{definition}

In order to have a unique half-pairing associated with a
word we impose a third condition which, in particular, will
exclude the second and third diagrams in Figure 2.

\begin{definition}\label{outward}
Let $\pi$ be a non-crossing half-pairing of $[n]$. To each $i \in [n]$ we assign an
orientation: out or in. Each singleton is assigned the \textit{out} orientation. For each
pair $(r, s)$ of $\pi$ exactly one of the cyclic intervals $[r,
  s]$ or $[s, r]$ contains a singleton (recall that we have at least one
  singleton). If $[r, s]$ does
\textit{not} contain a singleton, then we assign $r$ the
\textit{out} orientation and $s$ the \textit{in}
orientation.

\end{definition}

\begin{definition}
Let $\pi$ be a non-crossing half-pairing. We say that $i$
covers the letter $j$ if both have the out orientation and
either $i+1 = j$, or $\pi$ pairs each letter of the cyclic
interval $[i+1, j-1]$ with some other letter in the cyclic
interval $[i+1, j-1]$. In particular this means that $\pi$
has no singletons in $[i+1, j-1]$.
\end{definition}

\begin{center}

\includegraphics{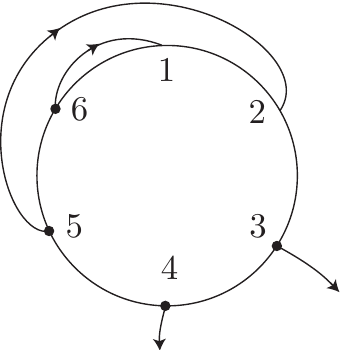}
\medskip

\leavevmode \vbox{\hsize 290pt\raggedright\noindent {\small\textbf{Figure 3.} The four
outward oriented points are 3, 4, 5, and 6. 3 covers 4, 4 covers 5 and 3, and 5 covers
6.}}
\end{center}

\begin{definition}
Let $w = l_1 \cdots l_n$ be a word and $\pi$ a non-crossing half-pairing of $[n]$. We say
that $\pi$ is $w$-\textit{admissible} if it is a $w$-pairing and we have for all $i$ and
$j$, $l_i \not = l_j^{-1}$ whenever $i$ covers $j$.

\end{definition}

\begin{center}

\includegraphics{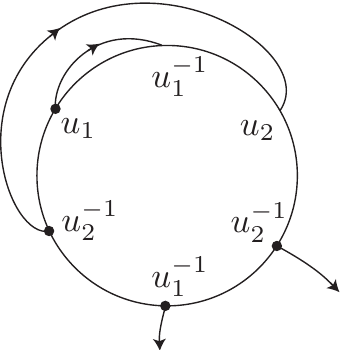}\quad
\raise2em\vbox{\hsize 220pt\raggedright\noindent
  {\small\textbf{Figure 4.} Let $w = u_1^{-1} u_2 u_2^{-1}
    u_1^{-1} u_2^{-1} u_1$ and $\pi = \{ (1, 6), (2, 5),
    (3), (4) \}$. $\pi$ is $w$-admissible. The second and
    third diagrams in Figure 2 are not $w$-admissible. }}
\end{center}

We shall show in Theorem \ref{uniqueness} that every word has a unique $w$-admissible
half-pairing. One way to obtain it is shown in Figure 5 below for the word in $u=u_1$ and
$v=u_2$ given by $w = uvv^{-1}u^{-1}v^{-1}u^{-1}$.

If a word $w$ has good reduction then the algorithm in the caption of Figure 5 produces a
periodic pattern from the start.  If a word $w$ has a cyclic permutation $w'$ which has
good reduction then use the unique $w'$-admissible non-crossing half-pairing of $w'$. It
will be a theorem that the resulting partition is independent of which cyclic rotation we
choose.

Conversely, given a $w$-admissible half-pairing with $k$
through strings, we shall see that the cyclic permutations
that start with one of these $k$ through strings will be the
permutations with good reduction.

These diagrammatic results will enable us to prove the
theorem below.

\begin{definition}
Given a word $w$ let $v$ be the letters of $w$ to which the
through strings of the unique $w$-admissible half-pairing
are attached. Then $v$ is a cyclic reduction of $w$ -- we
shall call it the \textit{standard} cyclic reduction of $w$
and denote it $\wh{w}$.
\end{definition}

\begin{theorema}
Let $k\geq 1$ and $v$ be a cyclically reduced word of length $k$. The number of words in
$\mathbb{F}_N$ of length $n$, whose standard cyclic reduction is $v$, is
$$(2N - 1)^{(n-k)/2} \ab \times
\binom{n}{(n-k)/2}.$$
In particular, this number does not depend on $v$.

\end{theorema}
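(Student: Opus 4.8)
The plan is to count words $w = l_1 \cdots l_n$ in $\F_N$ of length $n$ whose standard cyclic reduction is a fixed cyclically reduced word $v$ of length $k$, by exploiting the bijective content of the Non-Commutative Cycle Lemma and the uniqueness of the $w$-admissible half-pairing (Theorem \ref{uniqueness}). First I would set up the correspondence: by the uniqueness theorem, to each such $w$ there is attached a unique $w$-admissible non-crossing half-pairing $\pi$ on $[n]$ with exactly $k$ through strings, and by definition of $\wh w$ the letters sitting on the $k$ singletons spell out $v$ (read in cyclic order starting from a distinguished point — but note that since $v$ is \emph{fixed} as a word, not up to cyclic rotation, I need to be careful about which singleton is ``first''; I will return to this). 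So the set of words I want to count maps to the set of pairs $(\pi, \text{labelling})$ where $\pi$ is a non-crossing circular half-pairing on $[n]$ with $k$ through strings, and the labelling assigns to each of the $(n-k)/2$ pairs $(r,s)$ of $\pi$ a letter $l_r \in \{u_1^{\pm 1}, \dots, u_N^{\pm 1}\}$ (then $l_s = l_r^{-1}$ is forced by condition (i) of a $w$-pairing), subject to the $w$-admissibility constraint $l_i \neq l_j^{-1}$ whenever $i$ covers $j$; the singletons receive the fixed letters of $v$ with no freedom.

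The key computational step is then twofold. First, count the non-crossing circular half-pairings $\pi$ on $[n]$ with $k$ through strings: I expect this to be exactly $\binom{n}{(n-k)/2}$, which would come from a standard lattice-path / ballot-type argument — a non-crossing half-pairing on a circle with $k$ singletons and $(n-k)/2$ opening/closing pairs is encoded by a sequence of $n$ steps (each an opener, a closer, or a through string) and the non-crossing-plus-half-pairing condition (that the $\tilde\pi$ joining all singletons is non-crossing) forces the singletons to be ``visible from the center,'' so that the combinatorial data reduce to choosing the $(n-k)/2$ positions of one half of each pair among $n$ slots in a constrained way that collapses to a single binomial. This is precisely the place where the diagrammatic results set up before the theorem do the heavy lifting, and it is the step I expect to be the main obstacle: making the counting of admissible diagrams genuinely clean, and in particular seeing why the ``covering'' admissibility condition together with the choice of labels multiplies out to exactly $(2N-1)^{(n-k)/2}$ rather than something messier.

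For the labelling count, I would process the pairs of $\pi$ from the inside out (innermost pairs first). For an innermost pair $(r,s)$ — one with nothing but already-resolved material or nothing at all in the cyclic interval $[r+1, s-1]$ — the outward-oriented element, say $r$, covers some outward-oriented letter $j$ (either $r+1=s$, in which case... wait, one must check the orientation bookkeeping), and the constraint says $l_r$ must avoid being the inverse of the letter $l_j$ at the element $r$ covers; since $l_j$ is already determined (it lies deeper in the nesting or is a fixed letter of $v$), this forbids exactly one of the $2N$ possible values of $l_r$, leaving $2N-1$ choices. Iterating over all $(n-k)/2$ pairs gives the factor $(2N-1)^{(n-k)/2}$, and crucially this count is independent of $v$ because at each step only one value is forbidden regardless of what that determined neighboring letter actually is. Multiplying by the $\binom{n}{(n-k)/2}$ choices of the underlying diagram $\pi$ yields the claimed formula. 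Finally I would reconcile the ``$v$ fixed vs.\ $v$ up to rotation'' subtlety: since the standard cyclic reduction $\wh w$ is a specific word (the through-string letters read starting from a canonical starting singleton determined by the admissible diagram), each word $w$ contributes to exactly one fixed $v$, and conversely the $k$ cyclic rotations of a given reduced word correspond to the $k$ choices of which singleton is canonical — but because we have pinned down $v$ as a word, this is already accounted for and no extra factor of $k$ (or division by $k$) appears; I would make this precise by tracing through Definition \ref{outward} to identify the canonical starting point of an admissible diagram. The remaining verification — that every pair $(\pi, \text{labelling})$ arising this way comes from a genuine length-$n$ word with $\wh w = v$, i.e.\ that the map is a bijection onto the described set — follows from the constructive side of the uniqueness theorem and the characterization (stated before the theorem) of $w$-admissibility.
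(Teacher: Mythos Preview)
Your approach is essentially the same as the paper's: it too establishes a bijection between words with $\wh w = v$ and coloured non-crossing half-pairings with $k$ through strings, then separately counts the diagrams as $\binom{n}{(n-k)/2}$ (via Lemma~\ref{count1}, a black/white dot bijection rather than a lattice-path argument) and the colourings as $(2N-1)^{(n-k)/2}$ by the one-forbidden-letter observation at each outward-oriented pair. Your rotation worry is unnecessary, since $\wh w$ is by definition the specific word $l_{i_1}\cdots l_{i_k}$ with $i_1 < \cdots < i_k$ the singleton positions in $[n]$, so no cyclic ambiguity arises.
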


\vbox{\begin{center}
\includegraphics{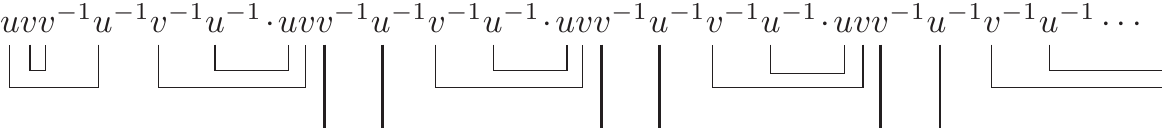}

\bigskip

\leavevmode \vbox{\hsize 350pt\raggedright\noindent
  {\small\textbf{Figure 5.}  Let $w =
    uvv^{-1}u^{-1}v^{-1}u^{-1}$ and $w_\infty$ be the word
    $w$ repeated infinitely often. In the figure above the
    repetitions of $w$ are separated by a `$\cdot$'. We
    start pairing from the left, searching for the first
    pair of adjacent letters that are inverses of each
    other. In this example it is the second and third
    letters. As soon as we find the first pair we return to
    the left and begin searching again, skipping over any
    letters already paired, in this case it is the first and
    fourth letters. Some letters will never get paired and
    these become the through strings. Eventually the pattern
    of half-pairings becomes periodic, this gives the unique
    $w$-admissible pairing. See Theorem \ref{uniqueness}. In
    this example the pattern of half-pairings becomes
    periodic after the fourth letter.}}

\end{center}}

\begin{remark} Recall that $\mathbb{F}_N$ is the free group
on the generators $u_1, u_2, \dots,\ab u_N$ and that $\mathbb{C}[\mathbb{F}_N]$ is the
group algebra of $\mathbb{F}_N$. Let $x = u_1 + u_1^{-1} + \cdots + u_N + u_N^{-1}$.  By
$\wh{x^n}$ we mean the application of the standard cyclic reduction to each word in the
expansion of $x^n$. Let $Q_k$ be the element of $\mathbb{C}[\mathbb{F}_N]$ which is the
sum of all cyclically reduced elements of length $k$. By the theorem above each word in
$Q_k$ is the standard cyclic reduction of the same number of words in the expansion of
$x^n$. Thus when we partition the set of words in $x^n$ that cyclically reduce to a word
of length $k$, into subsets according to which is their standard cyclic reduction, all
the equivalence classes have the same number of elements, namely $s_{n,k} = (2N -
1)^{(n-k)/2} \ab \times \binom{n}{(n-k)/2}$, when $n - k$ is even, and 0 when $n - k$ is
odd (for $n > 0$ and $k > 0$).  Hence we have the following corollary. Note that the
number, $s_{n,0}$, of words in $x^n$ that are reducible to 1 doesn't follow the simple
rule above; indeed, the sequence $\{ s_{n,0}\}_n$ is the moment sequence of the
distribution of $x$, which is the so-called Kesten measure, see \cite{k}.
\end{remark}

\begin{corollary}\label{xtoQ}
\[\wh{x^n} = Q_n + s_{n, n-2} Q_{n-2} + \cdots +
\bigg\{ \begin{array}{cc} s_{n,0} & $n$\ \mbox{even}
  \\ s_{n,1} Q_1 & $n$\ \mbox{odd} \end{array}
\]
\end{corollary}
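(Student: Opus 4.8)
The plan is to read this identity directly off the counting Theorem above, organizing the (uncollected) expansion of $x^n$ by the cyclic-reduction length of each word. Since $x = u_1 + u_1^{-1} + \cdots + u_N + u_N^{-1}$ is a sum of $2N$ distinct elements, the formal expansion of $x^n$ is $\sum_w w$, where $w$ runs over all $(2N)^n$ words $l_1 \cdots l_n$ with $l_i \in \{u_1^{\pm 1}, \dots, u_N^{\pm 1}\}$, each occurring exactly once. By Theorem \ref{uniqueness} every such $w$ has a well-defined standard cyclic reduction $\wh w$, and by the Remark $\wh{x^n}$ means applying $\wh{\ \cdot\ }$ to each of these words separately; hence $\wh{x^n} = \sum_w \wh w$.

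Next I would partition the set of length-$n$ words according to the length $k$ of their cyclic reduction. Writing $W_k$ for the set of length-$n$ words whose cyclic reduction has length $k$, one gets $\wh{x^n} = \sum_{k=0}^{n} \sum_{w \in W_k} \wh w$. Since each cancellation step in a reduction removes two letters, $W_k$ is empty unless $n-k$ is even, which is why only $Q_n, Q_{n-2}, Q_{n-4}, \dots$ can occur. For $w \in W_k$ the element $\wh w$ is a cyclically reduced word of length $k$, so $\sum_{w \in W_k} \wh w = \sum_v \#\{w \in W_k : \wh w = v\}\, v$, the sum being over cyclically reduced words $v$ of length $k$.

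Now I would invoke the counting Theorem above: for $k \ge 1$ it asserts $\#\{w \in W_k : \wh w = v\} = s_{n,k}$, independently of the cyclically reduced word $v$ of length $k$, whence $\sum_{w \in W_k} \wh w = s_{n,k} \sum_v v = s_{n,k} Q_k$. For $k = 0$ every word in $W_0$ is reducible to $1$, so $\wh w = 1$ for all $w \in W_0$ and $\sum_{w \in W_0} \wh w = |W_0| = s_{n,0}$, which is $s_{n,0} Q_0$ since the unique cyclically reduced word of length $0$ is the identity. Summing over the relevant $k$ and noting $s_{n,n} = (2N-1)^{0}\binom{n}{0} = 1$ gives exactly the asserted expansion, with leading term $Q_n$ and final term $s_{n,0}$ (for $n$ even) or $s_{n,1} Q_1$ (for $n$ odd).

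All of the substance here is the counting Theorem; granted that, the corollary is pure bookkeeping. The one place that needs a separate word is the $k = 0$ term, where the counting Theorem does not apply since it assumes $k \ge 1$: there one uses only the elementary fact that a word reducible to $1$ has the identity as its standard cyclic reduction, and one simply records the cardinality $|W_0|$ as $s_{n,0}$ without evaluating it --- consistent with the Remark, in which $\{s_{n,0}\}_n$ is identified as the Kesten moment sequence and explicitly noted not to follow the closed form valid for $k \ge 1$.
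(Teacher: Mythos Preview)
Your argument is correct and is essentially the same as the paper's, which gives the proof in the Remark immediately preceding the Corollary: expand $x^n$ as the sum over all length-$n$ words, partition by the length $k$ of the standard cyclic reduction, and use the counting Theorem to see that each cyclically reduced word of length $k\ge 1$ arises exactly $s_{n,k}$ times. Your treatment of the $k=0$ term and the parity constraint is likewise in line with what the paper records in that Remark.
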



\section{Proof of Main results}
\begin{notation}
Let $w$ be a word of length $n$ and $w_\infty$ the infinite word $w\, w\, w\, w \cdots$
obtained by repeating $w$ infinitely many times. Recall that a word is \textit{reducible
to 1} if it linearly (equivalently, cyclically) reduces to the identity in $\F_N$. If $w
= l_1 \cdots l_n$ is a word we let $w^{-1} = l_n^{-1} \cdots l_1^{-1}$; i.e. we reverse
the string and take the inverse of each letter but do not do any reduction. Given a word
$w$ we let $|w|$ be the length of the linear reduction of $w$.
\end{notation}

\begin{remark}\label{standardform}
Let $w$ be a word whose linear reduction is not cyclically reduced. Then either the first
and last letter of $w$ must cancel each other, or this cancellation must happen after
removing a prefix or a suffix which is reducible to 1. By repeatedly cancelling letters
at the ends of $w$ and removing prefixes or suffixes which are reducible to 1, we are
left with a word $\wt{w}$ which neither has a prefix or suffix which is reducible to 1
nor has cancellation of the first and last letters. For such a $\wt{w}$ the cyclic and
linear reduction are the same. Thus for every word $w$ there is a word $\wt w$ whose
linear reduction is cyclically reduced and words $x$ and $y$ such that $x y$ is reducible
to 1 and such that we have as a concatenation of strings $w = x \wt w y$. Depending on
the order of cancellation different decompositions of a word may be found -- we only
require the existence of such a decomposition. See Figure 6 for an example.

\end{remark}

\begin{proposition}\label{shift-translation}
Let $w$ be a word of length $n$ which reduces cyclically to a word of length $k > 0$. Let
$s$ be a prefix of $w_\infty$. If the number of letters in $s$ exceeds $n(1 + n/k)$ then
$|ws| = k + |s|$.
\end{proposition}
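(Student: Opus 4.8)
The plan is to describe, up to a bounded ``tail'', the linear reduction of any sufficiently long prefix of $w_\infty$, and then to read the statement off that description.

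First I would put $w$ into normal form: writing $\bar w$ for the linear reduction of $w$, we have $\bar w = g c g^{-1}$ with $c$ cyclically reduced, $g$ reduced, and no cancellation in the product $g\cdot c\cdot g^{-1}$, so that $|w| = 2|g| + k$ and $|g| = (|w|-k)/2 \le (n-k)/2$; here $c$ is a cyclic reduction of $w$, and this is essentially the decomposition of Remark~\ref{standardform}. (We may assume $k < n$; if $k = n$ then $g$ is empty, $w_\infty$ is already reduced, and the assertion is immediate. Note also $k \equiv n \pmod 2$.) It follows that $\bar w^q = g c^q g^{-1}$ is reduced for every $q$. Now let $s$ be a prefix of $w_\infty$ of length $\ell$, and write $\ell = qn + \rho$ with $0 \le \rho < n$; then $s = w^q p$, where $p$ is the length-$\rho$ prefix of the word $w$, so in $\F_N$ we have $[s] = g c^q g^{-1}[p] = g c^q \mu$, where $\mu$ denotes the linear reduction of $g^{-1}\bar p$ and $\bar p$ that of $p$. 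Hence $\bar s$, the linear reduction of $s$, is the linear reduction of the word $g c^q \mu$.

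The heart of the matter is the following claim: \emph{if $\ell > n(1+n/k)$, then the first $|g|+k$ letters of $\bar s$ spell out the word $g c$.} Granting this, the proposition drops out: $ws$ is again a prefix of $w_\infty$, of length $n+\ell$, so $\overline{ws}$ is the linear reduction of $\bar w\,\bar s = g c g^{-1}\bar s$; writing $\bar s = g c\, t$ with $t$ reduced, the factor $g^{-1}g$ cancels, and since $c$ is cyclically reduced the word $g c^2 t$ is reduced, so $\overline{ws} = g c^2 t$. As $|ws| = |g| + 2k + |t|$ while $|s| = |g| + k + |t|$, we conclude $|ws| = k + |s|$.

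To prove the claim, observe that when the word $g c^q \mu$ is reduced, its left segment $g c^q$ (already reduced) is untouched, and appending $\mu$ cancels at most $|\mu|$ letters off the \emph{right} end of $c^q$, hence consumes at most $\lceil |\mu|/k\rceil$ of the $q$ copies of $c$; thus $\bar s$ begins with $g c^{q'}$ for some $q' \ge q - \lceil |\mu|/k\rceil$, and it is enough to guarantee $q' \ge 1$. It therefore remains to control $|\mu|$ against $q$. For $|\mu|$: letting $p'$ be the suffix of $w$ complementary to $p$, we have $[p][p'] = [w]$, whence $\mu$ equals also the linear reduction of $c\, g^{-1}[p']^{-1}$; bounding its length in both ways and using $|\bar p| + |\overline{p'}| \le n$ and $|g| \le (n-k)/2$, one finds $|\mu| \le |g| + \min(|\bar p|,\, k + |\overline{p'}|) \le n$ — and a sharper estimate is available with a little more care. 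For $q$: the inequality $\ell > n + n^2/k$ together with $\ell = qn + \rho$, $\rho < n$, forces $q > n/k$. The last step — combining $q > n/k$ with the bound on $|\mu|$ to get $q - \lceil |\mu|/k\rceil \ge 1$ exactly under the hypothesis $\ell > n(1+n/k)$, and checking the boundary integer cases (where the parity $k \equiv n \pmod 2$ and the sharpened bound on $|\mu|$ come in) — is elementary but fiddly, and I expect this cancellation bookkeeping to be the one genuine obstacle; everything else is formal manipulation of reduced words.
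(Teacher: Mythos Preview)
Your overall architecture---normalise $\bar w = g c g^{-1}$, write $[s] = g c^q \mu$, and show that for $\ell$ large the reduced word $\bar s$ begins with $gc$---is sound, and the deduction of $|ws| = k + |s|$ from that claim is correct. But the step you flag as ``fiddly'' is a real gap, not just bookkeeping: the crude bound $|\mu|\le n$ together with $q>n/k$ does \emph{not} force $(q-1)k\ge |\mu|$ (try $n=5$, $k=2$, $q=3$), and neither does the averaged bound you sketch. Something structural is needed.

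The paper supplies exactly that missing structural fact, phrased in terms of the string decomposition $w = x\,\wt w\,y$ of Remark~\ref{standardform}: since $\bar x$ and $\bar y$ are inverse to each other, the last letter of $\bar x$ is the inverse of the first letter of $\bar y$, so cancellation with $\wt w$ can occur on \emph{at most one side}. This dichotomy replaces your quantitative bound on $|\mu|$ by a clean two-case computation, and $m>n/k$ is then enough with no boundary fiddling. In your notation the same idea reads: if there is any cancellation at all between $c^q$ and $\mu$, then the first letter of $\mu$ is the inverse of the last letter of $c$, hence (since $cg^{-1}$ is reduced) is \emph{not} the first letter of $g^{-1}$; so all of $g^{-1}$ was consumed in forming $\mu=\overline{g^{-1}\bar p}$, i.e.\ $\bar p = g\mu$ with no cancellation. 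Running the same argument through your second expression $\mu = \overline{c\,g^{-1}(\overline{p'})^{-1}}$ gives the symmetric constraint, and the two together (plus $|\bar p|\le\rho$, $|\overline{p'}|\le n-\rho$, $q>n/k$) close the inequality immediately. So your route works once you import this one-sided-cancellation observation; without it, the bound you propose is not strong enough.
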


\begin{proof}
Write $w = x \wt{w} y$ with $x$, $y$, and $\wt{w}$ as in
Remark \ref{standardform} above. For each positive integer
$m$ the length of the linear reduction of $(\wt{w})^m$ is $m
k$.

Now the linear reduction of $x$ and the linear reduction of
$y$ are inverses of each other, so the last letter of the
linear reduction of $x$ is the inverse of the first letter
of the linear reduction of $y$. Thus, if there is any
cancellation between $x$ and $\wt{w}^m$ there can be none
between $\wt{w}$ and $y$, and vice versa for cancellation
between $\wt{w}^m$ and $y$, i.e. if there is cancellation
between $\wt{w}$ and $y$ there can be none between $x$ and
$\wt{w}$.

Let $s$ be a prefix of $w_\infty$ with $i > n(1 + n/k)$ letters. We shall show that $|w
s| = k + |s|$. Let $m = [i/n]$. Since $i > n(1 + n/k)$, we have $m > n/k$.

First, suppose there is cancellation between $x$ and $\wt{w}$ but none between $\wt{w}$
and $y$. Write $s$ as $w^m s'$ with $s'$ a prefix of $w$. Since $m > n/k$ the last letter
in the linear reduction of $x \wt{w}^m$ is the last letter of $\wt{w}$. Thus $|s| = |x
\wt{w}^m y s'| = |x \wt{w}^m| + |y s'|$ and likewise
\begin{eqnarray*}
|w s |
&=&
|x \wt{w}^{m+1} y s'| = |x \wt{w}^{m+1}| + |y s'| \\
&=&
|x \wt{w}^m| + |\wt{w}| + |y s'| = k + |x \wt{w}^m| + |y s'| \\
&=&
k + |s|
\end{eqnarray*}

\bigskip
\vbox{\begin{center}
\includegraphics{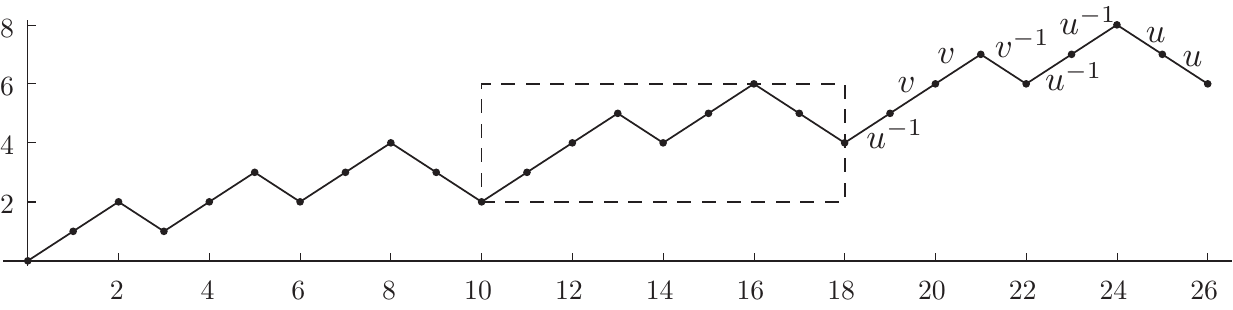}

\

\leavevmode \vbox{\hsize 320pt\raggedright\noindent
  {\small\textbf{Figure 6.} Let $w=
    uuu^{-1}vvv^{-1}u^{-1}u^{-1} = x \wt{w} y$ where $x =
    uu$, $\wt{w} = u^{-1}v$, and $y =
    vv^{-1}u^{-1}u^{-1}$. The graph of $t_i$ is shown, where
    $t_i$ is the length of the linear reduction of the first
    $i$ letters of $w_\infty$. The graph becomes
    shift-periodic at $i = 10$. The region enclosed in
    dotted lines shows one period.}}

\end{center}}

Conversely, suppose that there is cancellation between $\wt{w}$ and $y$ but none between
$x$ and $\wt{w}$. Then, as $m > n/k$, the linear reduction of $\wt{w}^m y$ begins with
the same letter as does $\wt{w}$. Hence $|s | = |x \wt{w}^m y s'| = |x| + |\wt{w}^m y
s'|$ and
\begin{eqnarray*}
|w s|
&=&
|x \wt{w}^{m+1} y s'| = |x| + |\wt{w}^{m+1} y s'| \\
&=&
|x | + |\wt{w}| + |\wt{w}^m y s'| = k + |x| + |\wt{w}^m y s'| \\
&=&
k + |s|
\end{eqnarray*}

\end{proof}


\begin{proof}[\textit{Proof of the Non-Commutative Cycle Lemma}]
Let $t_i$ be the length of the linear reduction of the first
$i$ letters of $w_\infty$. Choose $i_0 > n(1 + n/k)$. Then
by Proposition \ref{shift-translation}, for any $i \geq
i_0$, $t_{n+i} = k + t_i$. Choose $i_1$ to be the smallest
$i_1 \geq i_0$ such that $t_{i_1} < t_j$ for all $j > i_1$,
i.e. $i_1$ is the largest $i$ such that $t_{i} = t_{i_0}$.
Choose $i_2$ to be the largest $i$ such that $t_{i_2} = 1 +
t_{i_1}$. In general for $l \leq k$, choose $i_l$ to be the
largest $i$ such that $t_{i_l} = l - 1 + t_{i_1}$. Since
$t_{i_l - n} = t_{i_l} - k \leq t_{i_1}$, we must have $i_1
< i_2 < \cdots < i_k \leq i_1 + n$.

For each $l$ we choose the cyclic permutation of $w$ that starts after the ${i_l}^{th}$
letter of $w_\infty$. Since $t_{i_l + n} - t_{i_l} = k$ for such a word the linear and
cyclic reductions are the same. Also since $t$ never descends back to $t_{i_l}$, such a
word will have no prefix which is reducible to 1. Thus it has good reduction.

If we choose a cyclic permutation at an $i$ such that $t_{i} = t_{i-1} - 1$, the
resulting word will be such that its linear reduction is not cyclically reduced. Indeed,
let $s$ be the prefix of $w_\infty$ consisting of the first $i-1$ letters, and let $w_0$
be the $n$ letters following $s$. We must show that $|w_0| > k$. Since $t_i = t_{i-1} -
1$ there is cancellation between $s$ and $w_0$. Thus $t_{i-1} + |w_0| = |s| + |w_0| > |s
w_0| = t_{i + n - 1} = k + t_{i-1}$, hence $|w_0| > k$.

If we choose a cyclic permutation that starts at an $i$ for which there is $j > i$ with
$t_j = t_i$, the resulting word will also have a prefix which is reducible to 1. Thus
there are only $k$ cyclic permutations that produce good reduction.
\end{proof}

\begin{theorem}\label{uniqueness}
Let $w$ be a word of length $n$. Then there is a unique
non-crossing half-pairing on $[n]$ which is $w$-admissible.
\end{theorem}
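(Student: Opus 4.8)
The strategy is to prove existence and uniqueness separately, using the "peeling off pairs" algorithm indicated in Figure 5 to produce the half-pairing, and then an exchange/normal-form argument to show it is the only admissible one.

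For existence, I would first apply Remark \ref{standardform} to write $w = x\wt w y$ with $xy$ reducible to $1$ and $\wt w$ having cyclic reduction equal to its linear reduction. The genuinely new content is to show that the greedy left-to-right pairing algorithm on $w_\infty$ (always cancel the leftmost adjacent inverse pair, never un-pair, restart from the left each time) eventually becomes periodic, and that one period of it restricted to $[n]$ defines a $w$-admissible non-crossing half-pairing. Periodicity I would get from Proposition \ref{shift-translation}: once $i \geq i_0 > n(1+n/k)$ the reduction-length function satisfies $t_{n+i} = k + t_i$, so the set of "surviving" positions and the pairing of the cancelled ones repeats with period $n$; the through strings are exactly the positions that are never cancelled, and there are exactly $k$ of them in each period since $t$ climbs by $k$ over each block of $n$ letters. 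That the resulting $\pi$ is non-crossing is immediate from the nested structure of cancellations (an adjacent-cancellation reduction process always yields a non-crossing pairing of the cancelled letters, and the singletons, being what is left, form the through strings of a half-pairing). That condition (1) of $w$-pairing holds is by construction; condition (2), that the surviving letters spell a cyclic reduction of $w$, follows because after peeling all pairs we are left with a linearly reduced word of length $k$ obtained by cyclic cancellations, hence a cyclic reduction. Finally, $w$-admissibility: if $i$ covers $j$ and both are out-oriented, I need $l_i \neq l_j^{-1}$; this is exactly the statement that the greedy algorithm would have paired $i$ with $j$ (they are cyclically adjacent after removing an already-internally-paired interval) had they been inverses — since it did not, they are not inverses. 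One has to check the orientation convention in Definition \ref{outward} matches the direction in which the algorithm consumes letters, i.e. the "out" endpoint of a pair is the one the algorithm reaches first; this bookkeeping is where I expect to spend the most care.

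For uniqueness, I would argue that any $w$-admissible half-pairing $\pi$ must coincide with the one produced above. The key is an exchange argument: suppose $\pi$ is $w$-admissible but differs from the greedy pairing $\pi_0$. Look at the leftmost position $p$ (in $w_\infty$, or equivalently the leftmost place around the circle where the two disagree) where $\pi$ and $\pi_0$ differ. Using non-crossingness of $\pi$ together with condition (1) (no prefix reducible to $1$) and the admissibility constraint (no covering pair of inverse letters), I would derive that at $p$ the partition $\pi$ is forced: the first available adjacent inverse pair must be a block of $\pi$, because if instead those two letters were out-oriented and one covered the other they would violate admissibility, and if they were split by $\pi$ into two different pairs the non-crossing condition plus the prefix condition would force a contradiction with the through-string count. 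Iterating, $\pi$ agrees with $\pi_0$ everywhere. The main obstacle, and the step I would flag as hard, is making this forcing argument airtight at the boundary between the "transient" part of $w$ (the $x$ and $y$ pieces, where the pattern has not yet stabilized) and the periodic core — one must rule out an admissible pairing that makes a "clever" non-greedy choice inside the transient region that still globally satisfies all three conditions. Remark \ref{standardform} and the estimate $n(1+n/k)$ from Proposition \ref{shift-translation} should be exactly the tools that pin this down, since they control how far into $w_\infty$ one must look before the behaviour is rigid.
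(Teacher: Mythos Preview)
Your approach is largely sound and shares the greedy-pairing core with the paper's proof, but you have missed the key organizational simplification the paper uses, and that omission is exactly what creates the obstacle you flag at the end.

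The paper does not work with $w_\infty$ or Proposition~\ref{shift-translation} in this proof at all. Instead it first treats the case where $w$ already has good reduction: then the greedy algorithm runs directly on the finite word $w$ (no transients), position $1$ is necessarily a through string, and the forced-move argument for uniqueness is clean because ``leftmost'' has an unambiguous meaning and one never needs to cross the seam between $l_n$ and $l_1$. For a general $w$, the paper invokes the Non-Commutative Cycle Lemma, proved immediately before, to pass to a cyclic permutation $w'$ that has good reduction, builds the unique $w'$-admissible half-pairing by the first case, and then observes that the construction is \emph{local}: between any two consecutive through strings the pairing is determined by the same rule regardless of which through string one starts from, so the result is independent of the chosen rotation. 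Rotating back gives the $w$-admissible half-pairing on $[n]$.

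Your route via $w_\infty$ and periodicity essentially re-derives, inside this proof, the portion of the Cycle Lemma that the paper has already proved and now simply cites. It can be made to work, but the ``transient region'' you worry about is entirely an artifact of this choice: the object under study is a half-pairing on $[n]$, not on $w_\infty$, and there is no transient region on $[n]$. Your uniqueness argument becomes awkward precisely because ``leftmost position where $\pi$ and $\pi_0$ differ'' is ill-defined on a circle with no distinguished starting point; the paper fixes this by starting at a through string, which is guaranteed to sit at position $1$ once one has reduced to the good-reduction case. If you adopt that reduction, your exchange argument collapses to the paper's forced-move argument and the hard step you flag simply disappears. The tools you propose for that step (Remark~\ref{standardform} and the bound $n(1+n/k)$) are not needed here; they have already done their work in the proof of the Cycle Lemma.
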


\begin{proof}
Suppose $w = l_1 \cdots l_n$ has good reduction then we
construct the unique half-pairing which is $w$-admissible as
follows. Starting with $l_1$ and moving to the right find
the first $i < n$ such that $l_i = l_{i+1}^{-1}$. Pair these
elements and return to $l_1$ and repeat the process,
skipping over any letters already paired.  Continue passing
through $w$ until no further pairings can be made. See Figure 5. Put
through strings on any unpaired letters. This produces a
half-pairing which we denote $\pi$.  As the pairs only
involve adjacent letters or pairs that are adjacent after
removing an adjacent pair no crossings will be
produced. Moreover no pair $(r, s)$, $r < s$, will be
produced with an unpaired letter in between. Thus the
partition will be a non-crossing half-pairing.

To show that $\pi$ is $w$-admissible we must show that there
are no $i$ and $j$ such that $i$ covers $j$ and $l_i =
l_j^{-1}$. Suppose $i$ covers $j$, then according to the
definition, both have the out orientation.

Let us break this into two cases. First case: $i$ is a
through string. Since $i$ covers $j$, either $j= i+1$ or
$\pi$ pairs every point of the cyclic interval $[i+1, j-1]$
with another point of $[i+1, j-1]$. In the first of these
possibilities the algorithm would have paired $i$ with $j$
unless $i = n$, but this would imply that the linear and
cyclic reduction of $w$ are not the same. Thus we are left
with the case that $\pi$ pairs every point of the cyclic
interval with another point in $[i+1, j-1]$.

Since $w$ has good reduction, $\pi$ starts with a through string -- else $w$ would have a
prefix which is reducible to 1. Thus we must have $i < j$ since otherwise the cyclic
interval $[i+1,j-1]$ would contain a through string. Hence each number in the interval
$[i+1, j-1]$ is paired by $\pi$ with another number in the interval $[i+1, j-1]$. Now our
algorithm would have paired $l_i$ with $l_j$, so we cannot have $l_i = l_j^{-1}$.

The second case is when $i$ is the opening point of a pair
$(i, j')$ of $\pi$. We must have $i < j' < j$, for otherwise
our algorithm would have paired $i$ with $j$. However this
contradicts our assumption that each number in the interval
$[i+1, j-1]$ is paired by $\pi$ with another in the
interval. Thus $\pi$ is $w$-admissible.

To see that $\pi$ is unique notice that each time we add a
pair it is a forced move. Indeed suppose $i$ is the first
$i$, starting from the left, such that $l_i =
l_{i+1}^{-1}$. We cannot pair $l_i$ with any earlier element
because that would imply the earlier existence of an
adjacent pair; we cannot pair $l_i$ with any later letter as
this would force $i$ to cover $i+1$. We then look for the
next pair of elements either adjacent or adjacent after
skipping over $\{i, i+1\}$. By the same argument this
pairing is also forced and continuing in this way we see
that all pairs are forced and thus there is only one
$w$-admissible half-pairing.

Now suppose that $w$ does not have good reduction. By the Non-Commutative Cycle Lemma
there are $k$ cyclic permutations of $w$ which have good reduction -- one for each
through string. Indeed, each cyclic permutation of $w$ which has good reduction begins
with a through string. Between each pair of through strings the method for pairing the
elements is always the same: start at the first letter to the right of the through string
and pair the first pair of adjacent letters that are inverses of each other and then
return the the through string and repeat. Thus the method of pairing is entirely `local'
and is independent of at which through string we begin.
\end{proof}

\begin{lemma}\label{count1}
The number of non-crossing half-pairings on $[n]$ with $k$
through strings is $\binom{n}{(n-k)/2}$.
\end{lemma}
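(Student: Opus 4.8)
The plan is to set up a bijection between non-crossing half-pairings on $[n]$ with $k$ through strings and some more easily counted combinatorial object, or else to count directly via a generating-function or recursive argument. The cleanest route I would pursue is the following. A non-crossing half-pairing $\pi$ on $[n]$ with $k$ through strings consists of $(n-k)/2$ pairs (so in particular $n-k$ must be even, and one should note that $\binom{n}{(n-k)/2}=0$ is consistent with the count being $0$ when $n-k$ is odd) together with $k$ singletons, subject to: the pairs form a non-crossing partial matching, and when all singletons are merged into one block the result is still non-crossing. The merging condition is equivalent to saying that the singletons are \emph{cyclically consecutive} among the ``visible'' points — more precisely, no pair $(r,s)$ of $\pi$ may have singletons on both sides of it. I would first record this reformulation as the combinatorial heart of the matter.

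Given that reformulation, here is the counting argument I would carry out. Think of the $n$ points on a line (not a circle). First choose which $(n-k)/2$ points are the \emph{openers} of pairs: I claim that once we decide, for each of the $n$ points, whether it is an opener ``$($'', a closer ``$)$'', or a through string, the half-pairing $\pi$ is \emph{uniquely determined}, and moreover the admissibility/non-crossing-after-merging condition forces the sequence of symbols to be exactly of the form (a string of $)$'s and matched $()$ blocks at the start, for the wrap-around part), then a run of through strings, then the rest. Actually the slick version: encode $\pi$ by the word in $\{+1,-1\}$ of length $n$ where a pair-opener and each through string get $+1$ and a pair-closer gets $-1$; the non-crossing condition on the pairs together with the single-block condition on the through strings should translate, via the classical Cycle Lemma interpretation already set up in the introduction (prefixes of $u^{\epsilon_1}\cdots u^{\epsilon_n}$ not reducible to $1$), into: the through strings are precisely the $+1$'s that are never cancelled. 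The number of such words is the number of ways to place the $(n-k)/2$ closing symbols among the $n$ positions, i.e. $\binom{n}{(n-k)/2}$ — but I need to check that every placement of $(n-k)/2$ closers yields a valid configuration and that distinct placements yield distinct half-pairings.

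The step I expect to be the main obstacle is precisely that last equivalence: showing the map (half-pairing) $\mapsto$ ($\pm1$ word recording opener/closer/through) is a bijection onto \emph{all} words with exactly $(n-k)/2$ entries equal to $-1$. One direction — that a half-pairing gives such a word — is easy. For the reverse, given a word with $(n-k)/2$ minus-ones, I would reconstruct $\pi$ by the standard ``match each $-1$ with the nearest available $+1$ to its left, treating the $n$ points cyclically'' procedure (exactly the pairing algorithm from the proof of Theorem~\ref{uniqueness}), let the unmatched $+1$'s be the through strings, and then verify (i) this is non-crossing, (ii) merging the singletons keeps it non-crossing — this is where one uses that a $-1$ always grabs its \emph{closest} preceding unpaired $+1$, so no singleton can be trapped strictly inside a pair — and (iii) the construction inverts the encoding. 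Handling the cyclic wrap-around carefully (a pair may have its opener after its closer in the linear order, as in Figure 2) is the fiddly part; I would reduce to the linear case by cutting the circle at a through string, which exists since $k\ge 1$, and then the count becomes the manifestly $\binom{n}{(n-k)/2}$-many choices of closer-positions. Once the bijection is established the lemma is immediate.
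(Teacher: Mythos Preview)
Your approach is essentially the paper's: both give a bijection with the $(n-k)/2$-element subsets of $[n]$ by marking one endpoint of each pair (the paper puts a black dot on the outward-oriented endpoint of Definition~\ref{outward}, you put a $-1$ on the closer --- these are dual choices) and reconstructing the half-pairing by a greedy clockwise/leftward matching rule, with the unmatched points becoming the through strings. The paper works directly on the circle using the already-defined out/in orientation, which sidesteps your cut-at-a-through-string reduction, but the content is the same.
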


\begin{proof}
We use the method introduced in \cite{kms}. We place the
points $1, 2, 3, \dots, n$ around the outside of a circle in
clockwise order. On each point we shall place either a black
dot or a white dot with a total of $(n-k)/2$ black dots and
$(n+k)/2$ white dots. There are $\binom{n}{(n-k)/2}$ ways of
doing this so we only have to show that each assignment of
dots produces a unique non-crossing half-pairing and all
half-pairings are produced in this way.

Now $(n-k)/2$ will be the number of pairs in the half
pairing and each black dot will indicate which of the two
points of the pair has the outward orientation. Starting at
any black dot and moving clockwise search for the first
available white dot not already paired with a black dot ---
except every time we pass over a black dot we skip a white
dot to leave a white dot for the black dot to pair with. We
proceed until all black dots are paired. Any remaining white
dots become through strings.

Conversely starting with a non-crossing half-pairing on
$[n]$ with $k$ through strings, put a white dot on each
through string and a white dot on the point of each pair
with the inward orientation. Finally put a black dot on the
point of each pair with the outward orientation. This gives
the bijection between diagrams and dot patterns.
\end{proof}

\begin{center}
\hbox{}\hfill
\raise0.7em\hbox to 65pt{\hfil\includegraphics{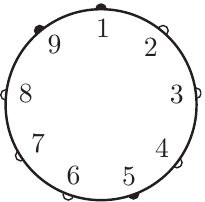}\hfil} \hfill
\includegraphics{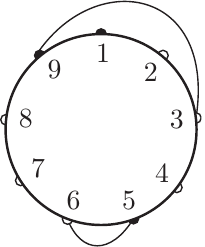}\hfill
\includegraphics{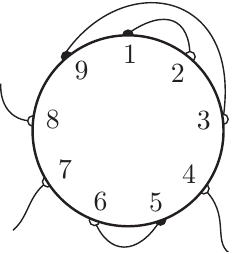} \hfill
\hbox{}

\

\leavevmode \vbox{\hsize 220pt\raggedright\noindent
{\small\textbf{Figure 7.} At the left a dot diagram, in the
centre the same diagram with a few strings added, and at the
right the completed diagram.}}
\end{center}

\begin{theorem}\label{counting-theorem}
Let $v$ be a cyclically reduced word of length $k$. The
number of words of length $n$, whose standard cyclic
reduction is $v$, is $(2N - 1)^{(n-k)/2} \ab \times
\binom{n}{(n-k)/2}$, in particular this number does not
depend on $v$.
\end{theorem}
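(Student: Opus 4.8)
The plan is to combine the two structural results already at hand: the Non-Commutative Cycle Lemma (counting cyclic permutations with good reduction) and the uniqueness theorem (Theorem~\ref{uniqueness}), together with the dot-counting Lemma~\ref{count1}. The key point is that the map $w \mapsto \pi$, sending a word of length $n$ to its unique $w$-admissible non-crossing half-pairing, fibers the set of words over the set of half-pairings, and we want to count, for a fixed cyclically reduced target $v$ of length $k$, the number of words $w$ with $\wh{w}=v$.

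\medskip

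\textbf{Step 1: Reduce to a fixed half-pairing.} I would first fix a non-crossing half-pairing $\pi$ on $[n]$ with $k$ through strings and count the words $w = l_1\cdots l_n$ which are $w$-admissible for this particular $\pi$ and whose singletons, read in order, spell out $v$. Once $\pi$ is fixed, the $k$ through-string positions $i_1 < \dots < i_k$ are determined, so condition~(ii) of a $w$-pairing forces $l_{i_1} = v_1, \dots, l_{i_k} = v_k$ with no freedom at all. So the only freedom is in the $(n-k)/2$ pairs.

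\medskip

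\textbf{Step 2: Count the choices on the pairs.} For a pair $(r,s)$ of $\pi$ with $r$ the out-oriented point, admissibility requires $l_r = l_s^{-1}$ (this is condition~(i) of a $w$-pairing) and, additionally, $l_r \ne l_j^{-1}$ whenever $r$ covers $j$ (and there is a symmetric constraint relating to who covers $s$). The covering relation, restricted to out-oriented points, is essentially a forest / nesting structure: each out-oriented point is covered by at most one ``parent'' out-oriented point. I would argue that when we assign letters to the out-oriented points in the order dictated by the covering structure (parents before children, or left-to-right within the cyclic interval), each new out-oriented point $r$ must avoid exactly one forbidden value --- the inverse of the letter on the point it covers (or, for the leftmost/uncovered out-oriented point, it must avoid making the first-and-last-letter cancellation, which amounts to one forbidden value as well since good reduction forces the diagram to start with a through string). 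Hence each of the $(n-k)/2$ out-oriented points has exactly $2N - 1$ admissible choices of letter, and the in-oriented partner is then determined. This yields $(2N-1)^{(n-k)/2}$ words for each fixed $\pi$.

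\medskip

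\textbf{Step 3: Sum over half-pairings.} By Lemma~\ref{count1} there are $\binom{n}{(n-k)/2}$ non-crossing half-pairings on $[n]$ with $k$ through strings. Since Theorem~\ref{uniqueness} guarantees that distinct words have distinct admissible half-pairings only in the sense that each word determines \emph{one} $\pi$ --- and conversely each valid $(\pi, \text{letter assignment})$ pair reconstructs a unique word --- the fibers over different $\pi$ are disjoint and exhaust all words with standard cyclic reduction $v$. Multiplying gives $(2N-1)^{(n-k)/2} \times \binom{n}{(n-k)/2}$, independent of $v$ as claimed.

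\medskip

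\textbf{Main obstacle.} The delicate step is Step~2: verifying that the number of admissible letter-choices at each out-oriented point is \emph{exactly} $2N-1$, no more and no less. One must check that the covering relation among out-oriented points is genuinely tree-like (so that ``avoid the inverse of your parent'' is the only constraint and these constraints never conflict or compound), that the leftmost out-oriented point in each ``cyclic block'' between through strings also has exactly one forbidden value, and --- crucially for the lower bound --- that any such locally-consistent assignment really does produce a word whose $w$-admissible pairing is $\pi$ (i.e.\ the greedy algorithm of Theorem~\ref{uniqueness} reconstructs $\pi$ and not some other diagram). This last verification is where the ``local'' nature of the pairing algorithm, emphasized at the end of the proof of Theorem~\ref{uniqueness}, does the real work.
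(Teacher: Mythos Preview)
Your approach is essentially the paper's own: set up the bijection between words $w$ with $\wh w = v$ and pairs (non-crossing half-pairing with $k$ through strings, admissible colouring), then count half-pairings by Lemma~\ref{count1} and colourings by observing that each out-oriented point of a pair has exactly one forbidden letter, giving $(2N-1)^{(n-k)/2}$. One small slip: in Step~2 you write that $r$ must avoid the inverse of the letter on ``the point it covers''; the admissibility condition actually constrains $r$ against the point that \emph{covers} $r$ (your parent--child ordering is right, but the direction of the forbidden value is reversed in your wording). With that corrected, your argument and the paper's coincide, and your ``Main obstacle'' paragraph simply makes explicit the two checks (uniqueness of the covering parent, and that every admissible colouring really arises from a word) that the paper leaves implicit.
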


\begin{proof}
Let $w = l_1 \cdots l_n$ be a word of length $n$ whose
standard cyclic reduction is $v = v_1 \cdots v_k$.  By
Theorem \ref{uniqueness} there exists $\pi$, a unique
non-crossing half-pairing $\pi$ on $[n]$ with $k$ through
strings which is $w$-admissible. If the through strings of
$\pi$ are at $i_1, \dots, i_k$ then $v_j = l_{i_j}$. We
shall say that the $j^{th}$ through string is
\textit{coloured} $v_j$. If $(r, s)$ is a pair of $\pi$ and
$r$ has the outward orientation then we shall say the pair
$(r, s)$ is \textit{coloured} $l_r$. Thus each word whose
standard cyclic reduction is $v$ is associated with a unique
non-crossing half-pairing coloured with the letters $\{u_1,
u_1^{-1}, \dots, u_N, u_N^{-1}\}$ subject to the rule that
no outward oriented point has the inverse colour of a point
by which it is covered.

It remains to count how many of these coloured diagrams
there are.  By Lemma \ref{count1} there are
$\binom{n}{(n-k)/2}$ diagrams with $k$ through strings. The
through strings are always coloured by the letters of $v$,
so there is no choice here. However the outward oriented
point of each pair can be coloured by any letter in $\{ u_1,
u_1^{-1}, \dots, u_N, u_N^{-1} \}$ except the inverse of the
colour that covers it. Thus there are $2N -1$ ways of
choosing this colour. The colour of the inward oriented
point of each pair is determined by the colour of the
corresponding outward oriented point of the pair.  Thus once
the diagram is selected there are $(2N - 1)^{(n-k)/2}$ ways
of colouring it.\end{proof}


\section{Concluding Remarks}

Suppose $U_1, \dots, U_N$ are independent $m \times m$ Haar distributed random unitary
matrices. Let $u_1, \dots, u_N$ be the generators of the free group $\F_N$ and
$\bC[\F_N]$ the group algebra of $\F_N$. Let $\phi: \bC[\F_N] \rightarrow \bC$ be the
tracial linear functional defined on words in $\F_N$ by $\phi(e) = 1$, $\phi(w) = 0$ for
words $w \not = e$ and then extended to all of $\bC[\F_N]$ by linearity. Suppose $Y$ is a
linear combination of words in $\{U_1, U_1^{-1},\dots,$ $U_N,U_N^{-1} \}$ and $y$ is the
corresponding linear combination of words in $\{u_1, u_1^{-1},\dots,$ $u_N,u_N^{-1} \}$.
Voiculescu \cite{vdn} showed that
$$\lim_{m\to\infty} \E[\frac 1m \Tr(Y)] = \phi(y),$$ thus
establishing the asymptotic $*$-freeness of the $U_1,\dots,U_N$. In particular this
implies the asymptotic freeness of the self-adjoint operators $X_1, \dots, X_N$, where
$X_i = U_i + U_i^{-1}$.

In recent years the fluctuation of random matrices has been
the object of much study (see \cite{cmss, kms, mn, mss,
  ms,emrs}). Let $X = X_1 + \cdots + X_N$ and for integers $p$
and $q$ consider the asymptotic fluctuation moments.
\[
\alpha_{p,q} = \lim_n \E\Bigl[ \bigl( \Tr(X^p) - \E[\Tr(X^p)]\bigr) \cdot \bigl(
  \Tr(X^q) - \E[\Tr(X^q)]\bigr) \Bigl]
\]
One way to understand these moments is via the theory of
orthogonal polynomials. In this situation it means finding a
sequence of polynomials $\{R_k\}_k$ such that for $k \not =
l$ we have
\[
\lim_n \E\Bigl[ \bigl( \Tr(R_k(X)) - \E[\Tr(R_k(X))]\bigr) \cdot \bigl(
  \Tr(R_l(X)) - \E[\Tr(R_l(X))]\bigr) \Bigr] =0
\]
Such a sequence of polynomials is said to diagonalize the
fluctuations. This has been done for a variety of random
matrix ensembles (see \cite{j} and \cite{kms} and the
references there).

Corollary \ref{xtoQ} suggests that there ought to be polynomials $\{P_n\}_n$ such that
$\wh{P_n(x)} = Q_n$. Indeed, using the Non-commutative Cycle Lemma we have shown that the
polynomials indicated by Corollary \ref{xtoQ} do diagonalize the fluctuations of the
operator $X$ above. The polynomials can also be obtained by modifying the Chebyshev
polynomials of the first kind: let $R_0(x) = 2, R_1(x) = 1$, and $R_{k+1}(x) = x R_k(x) -
(2N-1) R_{k-1}(x)$.  Moreover for $n$ odd $P_n = R_n$ and for $n$ even $P_n(x) = R_n(x) +
2$. The proofs of these results will be presented in a subsequent paper.

\section*{Acknowledgements}
The authors would like to thank the referees for a careful reading and useful suggestions.

\bibliographystyle{plain}

\end{document}